\title{\bf Li--Yau Inequalities for Dunkl Heat Equations}
\author{Huaiqian Li\footnote{Email: {\color{blue}huaiqianlee@gmail.com}}
\quad Bin Qian\footnote{Email: {\color{blue} binqiancs@yahoo.com   }}
 \vspace{2mm}
\\
{\footnotesize Center for Applied Mathematics, Tianjin University, Tianjin 300072, P. R. China} \\
{\footnotesize Department of Mathematics and Statistics, Changshu Institute of Technology,} \\
{\footnotesize Changshu, Jiangsu 215500, P. R. China.}
}
\date{}
\def\R{\mathbb{R}}
\def\D{\mathbb{D}}
\def\d{\textup{d}}
\def\D{\textup{D}}
\def\<{\langle}
\def\>{\rangle}
\def\Proof.{\noindent{\bf Proof. }}
\def\newdot{{\kern.8pt\cdot\kern.8pt}}
\newtheorem{theorem}{Theorem}[section]
\newtheorem{lemma}[theorem]{Lemma}
\newtheorem{corollary}[theorem]{Corollary}
\theoremstyle{definition}\newtheorem{remark}[theorem]{Remark}
\begin{document}
\allowdisplaybreaks
\maketitle
\makeatletter % '@' is now a normal "letter" for TeX
\renewcommand\theequation{\thesection.\arabic{equation}}
\@addtoreset{equation}{section}
\makeatother % '@' is restored as a "non-letter" character for TeX

\begin{abstract}
Motivated by recent works due to Yu--Zhao [J. Geom. Anal. 2020] and Weber--Zacher [arXiv:2012.12974], we study Li--Yau inequalities for the heat equation corresponding to the Dunkl Laplacian, which is a non-local operator parameterized by reflection groups and multiplicity functions. The results are sharp in the particular case when the reflection group is isometric to $\mathbb{Z}_2^d$.
\end{abstract}

{\bf MSC 2010:} primary 35K08, 33C52; secondary 33C80, 60J60, 60J75, 58J35

{\bf Keywords:} Dunkl operator; Heat kernel; Li--Yau inequality; Harnack inequality

\section{Introduction and main results}\hskip\parindent
Let $M$ be a $d$-dimensional complete Riemannian manifold with non-negative Ricci curvature. The pointwise Li--Yau inequality initially obtained in the seminal paper \cite{LiYau86} states that, for every positive solution to the heat equation $\partial_t u=\Delta u$ on $(0,\infty)\times M$, it holds
\begin{eqnarray}\label{LY86}
-\Delta\big(\log u(t,\cdot)\big)(x)\leq\frac{d}{2t},\quad t>0,\, x\in M,
\end{eqnarray} which is equivalent to
\begin{eqnarray}\label{LY86+}
%\frac{|\nabla\log u(t,\cdot)|^2(x)}{u(t,x)^2}-\frac{\partial_t u(t,x)}{u(t,x)}\leq\frac{d}{2t},\quad t>0,\, x\in M,
\frac{|\nabla u(t,\cdot)|^2(x)}{u(t,x)^2}-\frac{\partial_t u(t,x)}{u(t,x)}\leq\frac{d}{2t},\quad t>0,\, x\in M,
\end{eqnarray}
where $\Delta$ is the Laplace--Beltrami operator, $\nabla$ is the Riemannian gradient and $|\cdot|$ is the length in the tangent space. Let $\rho(\cdot,\cdot)$ be the Riemannian distance on $M$. \eqref{LY86} immediate implies the Li--Yau parabolic Harnack inequality, i.e.,
$$u(s,x)\leq u(t,y)\Big(\frac{t}{s}\Big)^{d/2}\exp\Big(\frac{\rho(x,y)^2}{4(t-s)}\Big),\quad x,y\in M,\,0<s<t<\infty,$$
and  is crucial to obtain estimates on the corresponding heat kernel and its gradient. Note that \eqref{LY86} is sharp in the sense that equality is achieved for the fundamental solution of the heat equation on the $d$-dimensional Euclidean space $\R^d$ (see \eqref{Gauss-kernel} below). Up to now, there are quite a few works on improving this inequality for small time and large time; see e.g. \cite{Hamilton93,Li-Xu11,BBG2017,WZ2021} and references therein.

Recently, in \cite{YZ2020}, sharp Li--Yau inequalities for the Laplace--Beltrami operator on hyperbolic spaces were obtained by employing the explicit formula for the corresponding heat kernel. Very recently, in \cite{WZ2021}, similar to the idea of \cite{YZ2020}, the Li--Yau inequality in the sense of \eqref{LY86} for the fractional Laplacian has been proved; however, the Li--Yau inequality of gradient type in the sense of \eqref{LY86+} has not been mentioned, where the ``gradient'' should be understood as the the carr\'{e} du champ operator induced by the fractional Laplacian (see also the conjectures at the end of \cite[Section 21]{Garo2018}). We should mention that there are works on the Li--Yau inequality in the setting of graphs via various curvature-dimension conditions in the sense of Barky--Emery \cite{BE1985}; see e.g. \cite{BHLLMY,Qian2017,Munch2018,DKZ} and references there in.

Motivated by \cite{YZ2020} and \cite{WZ2021}, in the present paper, we mainly study Li--Yau inequalities for the generalized heat equation and the heat kernel corresponding to the Dunkl Laplacian, which is a non-local operator parameterized by reflection groups and multiplicity functions.

Let $\langle\cdot,\cdot\rangle$ be the standard scalar product and $|\cdot|$ be the associated norm on $\R^d$. For every $\alpha\in\R^d\setminus\{0\}$, let $\alpha^\bot:=\{x\in\R^d: \langle\alpha,x\rangle=0\}$, the hyperplane orthogonal to $\alpha$, and denote $r_\alpha$ the mirror reflection with respect to $\alpha^\bot$, i.e.,
$$r_\alpha x =x-2\frac{\langle \alpha,x\rangle}{|\alpha|^2}\alpha,\quad x\in\R^d.$$

Let $\mathfrak{R}$ be a root system on $\R^d$, which means that $\mathfrak{R}$ is a finite set of nonzero vectors in $\R^d$ such that, for every $\alpha\in\mathfrak{R}$, $r_\alpha(\mathfrak{R})=\mathfrak{R}$ and $\alpha\R\cap \mathfrak{R}=\{\alpha,-\alpha\}$, where $\alpha\R:=\{a\alpha: a\in\R\}$. Let $\mathfrak{R}_+$ be a positive subsystem of $\mathfrak{R}$ such that $\mathfrak{R}$ can be written as the disjoint union of $\mathfrak{R}_+$ and $-\mathfrak{R}_+$, where $-\mathfrak{R}_+:=\{-\alpha: \alpha\in\mathfrak{R}_+\}$. The root system $\mathfrak{R}$ generates the reflection group, denoted by $G$, which is known to be a  finite  subgroup of the orthogonal group $O(d)$. Let $\kappa: \mathfrak{R}\rightarrow\R_+$ be the multiplicity function such that $\kappa_{g\alpha}=\kappa_\alpha$ for every $g\in G$ and every $\alpha\in\mathfrak{R}$. Without loss of generality, we normalize that $|\alpha|^2=2$ for every $\alpha\in\mathfrak{R}$.

Introduced in \cite{Dunkl1989} by C.F. Dunkl, for every $\xi\in\R^d$, the Dunkl operator $\D_\xi$ along $\xi$ associated to $\mathfrak{R}$ and $\kappa$ is defined by
$$\D_\xi f=\partial_\xi f+\sum_{\alpha\in\mathfrak{R}_+}\kappa_\alpha \langle\alpha,\xi\rangle \frac{f-r_\alpha f}{\alpha^*},\quad f\in C^1(\R^d),$$
where $\partial_\xi$ denotes the directional derivative along $\xi$ and, for every $x\in\R^d$ and every $\alpha\in\mathfrak{R}$, $\alpha^*(x):=\langle\alpha,x\rangle$, $r_\alpha f(x):=f(r_\alpha x)$. For convenience, write $\D_i$ for $\D_{e_i}$ and $\partial_i$ for $\partial_{e_i}$, $i=1,\cdots,d$, where $\{e_i\}_{i=1}^d$ is the standard orthonormal basis of $\R^d$. Denote $\nabla_\kappa=(\D_1,\cdots,\D_d)$ the Dunkl gradient and
$\Delta_\kappa=\sum_{i=1}^d \D_i^2$ the Dunkl Laplacian. From direct calculation, it is easy to see that
$$\Delta_\kappa f=\Delta f+2\sum_{\alpha\in\mathfrak{R}_+}\kappa_\alpha\Big[\frac{\langle\alpha,\nabla f\rangle}{\alpha^*} - \frac{f-r_\alpha f}{(\alpha^*)^2}\Big],\quad f\in C^2(\R^d).$$
In particular, when $\kappa=0$, $\nabla_0=\nabla$ and $\Delta_0=\Delta$ are the standard gradient operator and the standard Laplacian on $\R^d$, respectively.

Define the natural weight function $\omega_\kappa$ by
$$\omega_\kappa(x):=\prod_{\alpha\in\mathfrak{R}_+}|\langle \alpha,x \rangle|^{\kappa_\alpha},\quad x\in\R^d,$$
which is a homogeneous function of degree
$$\lambda_\kappa:=\sum_{\alpha\in\mathfrak{R}_+}\kappa_\alpha.$$
Let $\mu_{\kappa}(\d x)=\omega_\kappa(x)\d x$, where $\d x$ denotes the Lebesgue measure on $\R^d$.

Let $(p_t)_{t>0}$ be the Dunkl heat kernel corresponding to the Dunkl heat semigroup $(P_t)_{t>0}$, where $P_t:=e^{t\Delta_\kappa}$ for every $t>0$. See Section 3 and \cite{Rosler2003,ADH2019} for more information for the Dunkl heat semigroup and estimates on the Dunkl heat kernel.

The main results of the present paper are in order. The first one is on the Li--Yau inequality for the Dunkl heat kernel in the special situation when the reflection group $G$ is isomorphic to the Abelian group $\mathbb{Z}_2^d$.
\begin{theorem}\label{main-thm-1}
Suppose that $G$ is isomorphic to $\mathbb{Z}_2^d$. Then, for every $t>0$ and every $x,y\in\R$, the Li--Yau inequality for the Dunkl heat kernel holds, i.e.,
\begin{eqnarray}\label{LY-1}
-\Delta_\kappa\big(\log p_t(\cdot,y)\big)(x)\leq \frac{d+2\lambda_\kappa}{2t}.
\end{eqnarray}
\end{theorem}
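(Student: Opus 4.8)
The plan is to exploit the product structure forced by the hypothesis $G\cong\Z_2^d$ to reduce \eqref{LY-1} to a rank--one statement, and then, following the strategy of \cite{YZ2020,WZ2021}, to convert that statement into a pointwise inequality for modified Bessel functions by means of the explicit one--dimensional Dunkl heat kernel.

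\emph{Step 1 (reduction to rank one).} After the normalisation $|\alpha|^2=2$, the root system of $\Z_2^d$ is $\mathfrak{R}=\{\pm\sqrt2\,e_i:1\le i\le d\}$ with $\mathfrak{R}_+=\{\sqrt2\,e_i:1\le i\le d\}$, multiplicities $k_i:=\kappa_{\sqrt2 e_i}\ge0$, and $\lambda_\kappa=\sum_{i=1}^d k_i$; the reflection $r_{\sqrt2 e_i}$ is simply the flip of the $i$-th coordinate. Hence the Dunkl Laplacian splits as $\Delta_\kappa=\sum_{i=1}^d\L^{(i)}$, where $\L^{(i)}$ acts only on the variable $x_i$ and coincides there with the rank--one Dunkl Laplacian $\L_k g(s)=g''(s)+\tfrac{2k}{s}g'(s)-\tfrac{k}{s^2}\bigl(g(s)-g(-s)\bigr)$ for $k=k_i$ (both the local and the non-local parts of $\L^{(i)}$ annihilate functions independent of $x_i$). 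Accordingly $P_t$ is the tensor product of the rank--one semigroups and the kernel factorises, $p_t(x,y)=\prod_{i=1}^d p_t^{k_i}(x_i,y_i)$, so that
$$-\Delta_\kappa\bigl(\log p_t(\cdot,y)\bigr)(x)=\sum_{i=1}^d\Bigl(-\L_{k_i}\bigl(\log p_t^{k_i}(\cdot,y_i)\bigr)\Bigr)(x_i).$$
Since $\sum_{i=1}^d(2k_i+1)=d+2\lambda_\kappa$, it suffices to prove, for every fixed $k\ge0$, the rank--one Li--Yau bound
\begin{equation}\label{LYone}
-\L_k\bigl(\log p_t^{k}(\cdot,y)\bigr)(x)\le\frac{2k+1}{2t},\qquad t>0,\ x,y\in\R.
\end{equation}

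\emph{Step 2 (explicit kernel and algebraic reduction).} By the explicit expression for the one--dimensional Dunkl heat kernel (see Section~3), $p_t^{k}(x,y)=c_k\,t^{-(k+1/2)}e^{-(x^2+y^2)/(4t)}\,\Psi(xy/(2t))$, where $c_k>0$ and $\Psi(z):=\mathcal{I}_{k-1/2}(z)+\tfrac{z}{2k+1}\mathcal{I}_{k+1/2}(z)$, with $\mathcal{I}_\nu(z):=\Gamma(\nu+1)(z/2)^{-\nu}I_\nu(z)$ the entire even normalised modified Bessel function. Since $\mathcal{I}_{k-1/2}'=\tfrac{z}{2k+1}\mathcal{I}_{k+1/2}$ one has $\Psi=\phi+\phi'$ with $\phi:=\mathcal{I}_{k-1/2}$, and $\phi$ satisfies the Bessel-type equation $\phi''+\tfrac{2k}{z}\phi'=\phi$ together with $\phi(0)=1$, $\phi>0$, $z\phi'(z)>0$ for $z\ne0$ and $|\phi'|<\phi$. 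Writing $z=xy/(2t)$ and applying $\L_k$ in the $x$--variable, the Gaussian factor $e^{-(x^2+y^2)/(4t)}$ contributes precisely the term $\tfrac{2k+1}{2t}$ — which is the origin of the constant — and a direct computation yields the identity
\begin{equation}\label{LYreduce}
-\L_k\bigl(\log p_t^{k}(\cdot,y)\bigr)(x)=\frac{2k+1}{2t}-\frac{1}{x^2}\,\Phi(z),\qquad \Phi(z):=z^2(\log\Psi)''(z)+2kz(\log\Psi)'(z)-k\log\frac{\Psi(z)}{\Psi(-z)}.
\end{equation}
Here $\Psi(-z)=\phi(z)-\phi'(z)>0$, so $\Phi$ is well defined and $x\mapsto\Phi(z)/x^2$ extends continuously across $x=0$. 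Thus \eqref{LYone}, and with it Theorem~\ref{main-thm-1}, is equivalent to the single scalar inequality $\Phi(z)\ge0$ for all $z\in\R$.

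\emph{Step 3 (the Bessel inequality and the main obstacle).} This last inequality is the heart of the proof. Setting $g:=(\log\phi)'$, which is odd, obeys the Riccati equation $g'=1-g^2-\tfrac{2k}{z}g$ and satisfies $|g|<1$, one can use $\phi''=\phi-\tfrac{2k}{z}\phi'$ to recast $\Phi$ in the closed form
$$\Phi(z)=2k\left[\frac{g\bigl((2z+1)+(2z+1-2k)g\bigr)}{(1+g)^2}-\operatorname{artanh}g\right],$$
so that the claim becomes an inequality for the logarithmic derivative of $\mathcal{I}_{k-1/2}$. One checks readily that $\Phi(0)=0$, that $\Phi(z)=\tfrac{2k}{2k+1}z^2+O(z^3)$ near the origin, and that $\Phi(z)\to+\infty$ as $z\to\pm\infty$; the remaining, and main, difficulty is to exclude any interior change of sign, which I expect to handle through the Riccati equation for $g$ together with classical monotonicity and convexity properties of modified Bessel functions (positivity of $\phi$ and $\phi'$, log-convexity of $\phi$, monotonicity of $z\mapsto z\phi'(z)/\phi(z)$, and Turán-type inequalities for the $\mathcal{I}_\nu$). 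The genuinely new obstacle relative to the local computations of \cite{YZ2020,WZ2021} is the non-local term $-k\log\bigl(\Psi(z)/\Psi(-z)\bigr)$ in $\Phi$: it has no Euclidean counterpart, it changes sign, and pairing it against the Bessel terms $z^2(\log\Psi)''+2kz(\log\Psi)'$ so as to obtain the exact constant $2k+1$ is where the real work lies. Finally, the cases $\kappa=0$ and $z\to+\infty$ give the sharpness asserted after the theorem: when $\kappa=0$ one has $\phi=\cosh$, $\Psi(z)=e^z$, $(\log\Psi)''\equiv0$ and $\Psi(z)/\Psi(-z)=e^{2z}$, so $\Phi\equiv0$ and \eqref{LYreduce} reduces to the sharp Euclidean identity $-\Delta\log p_t=\tfrac{d}{2t}$, while in general $\log\Psi(z)=z-k\log z+O(1)$ as $z\to+\infty$ shows that the constant $\tfrac{d+2\lambda_\kappa}{2t}$ cannot be improved.
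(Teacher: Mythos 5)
Your Step 1 reduction to the rank-one bound $-\L_k\bigl(\log p_t^{k}(\cdot,y)\bigr)(x)\le\frac{2k+1}{2t}$ is exactly the paper's first move, and Step 2 is a legitimate reformulation of the same quantity via the closed Bessel form of the rank-one Dunkl kernel. The problem is Step 3: the entire content of the theorem is the scalar inequality $\Phi(z)\ge0$, and you do not prove it. You check $\Phi(0)=0$, the local expansion at the origin, and the behaviour at infinity, and then write that you ``expect to handle'' the absence of interior sign changes via the Riccati equation and Tur\'an-type inequalities for $\mathcal{I}_\nu$ --- that is a program, not an argument, and by your own admission it is ``where the real work lies.'' (The intermediate closed form for $\Phi$ in terms of $g=(\log\phi)'$ is also asserted without derivation and looks suspect as written, e.g.\ the term $2z+1$; but even granting it, nothing in the proposal establishes $\Phi\ge0$.) As it stands the proof is incomplete at its central step.

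For comparison, the paper avoids the Bessel/Riccati route entirely by keeping the Laplace-type integral representation $E_\kappa(u,v)=C_\kappa\int_{-1}^1 e^{suv}(1-s)^{\kappa-1}(1+s)^{\kappa}\,\d s$ and splitting the rank-one quantity into the local piece $\partial_{xx}^2\log p_t$ and the difference piece $J=\frac{\kappa}{x^2}\bigl[2x\partial_x\log p_t(x,y)+\log p_t(-x,y)-\log p_t(x,y)\bigr]$, proving separately that $\partial_{xx}^2\log p_t\ge-\frac{1}{2t}$ and $J\ge-\frac{\kappa}{t}$. With $a=xy/(2t)$ and the probability measure on $[-1,1]$ proportional to $(1-s)^{\kappa-1}(1+s)^{\kappa}e^{as}\,\d s$, the first bound is just nonnegativity of a variance (Cauchy--Schwarz), and the second reduces to $f(a)\ge f(0)=0$ for an explicit $f$, obtained by showing that an auxiliary determinant $h(a)$ is monotone via two further applications of Cauchy--Schwarz. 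Your $\Phi(z)/x^2$ is precisely the sum of these two nonnegative quantities, so the cleanest way to complete your argument is to abandon the combined Bessel inequality and prove the two summands separately in this elementary way.
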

We remark that $d+2\lambda_\kappa$ should be regarded as the homogeneous dimension of the Dunkl system. We should emphasize that  \eqref{LY-1} is sharp in the sense that, if $\kappa=0$, then $\Delta_\kappa=\Delta$, $\lambda_\kappa=0$, and $(p_t)_{t>0}$ is the heat kernel corresponding to $\Delta$ on $\R^d$, i.e.,
\begin{eqnarray}\label{Gauss-kernel}
p_t(x,y)=\frac{1}{(4\pi t)^{d/2}}\exp\Big(-\frac{|x-y|^2}{4t}\Big),\quad x,y\in\R^d,\,t>0;
\end{eqnarray}
hence
$$-\Delta\big(\log p_t(\cdot,y)\big)(x)=\frac{d}{2t},$$
for every $t>0$ and every $x,y\in\R^d$.

The results in Theorem \ref{main-thm-1} and in \cite[Theorem 3.2]{WZ2021} are not comparable, since from the probabilistic point of view, the stochastic process (also called Dunkl process) corresponding to the Dunkl Laplacian $\Delta_\kappa$ is a Markov jump process but not a L\'{e}vy process if $\kappa>0$ (see \cite[Section 3]{LZ2020} and also \cite{GaYor2005} for the explicit expression of the jumping kernel), while the process corresponding to the fractional Laplacian $(-\Delta)^{\beta/2}$ with $\beta\in(0,2)$ is an isotropic $\beta$-stable L\'{e}vy process (see e.g. \cite{Sato1999} for more details on L\'{e}vy processes).

The second one is on Li--Yau inequalities for solutions of the Dunkl heat equation.
\begin{theorem}\label{main-thm-2}
Let $T\in(0,\infty]$ and $\beta: (0,T)\times \R^d\rightarrow\R$ be a function.  Suppose that $u:[0,T)\times \R\rightarrow (0,\infty)$ is any $C^2$ solution to the Dunkl heat equation
\begin{equation}\label{dunkl-heat-equ}
\partial_t u(t,x)=\Delta_\kappa\big(u(t,\cdot)\big)(x),\quad (t,x)\in (0,T)\times\R^d.
\end{equation}
Then
\begin{eqnarray}\label{thm-2-0}
-\Delta_\kappa\big(\log p_t(\cdot,y)\big)(x)\leq \beta(t,x), \quad (t,x,y)\in (0,T)\times \R^d\times \R^d,
\end{eqnarray}
is equivalent to
\begin{eqnarray}\label{thm-2-1}
-\Delta_\kappa\big(\log u(t,\cdot)\big)(x)\leq \beta(t,x), \quad (t,x)\in (0,T)\times \R^d;
\end{eqnarray}
moreover, either \eqref{thm-2-0} or \eqref{thm-2-1} implies
\begin{eqnarray}\label{thm-2-1+}
\frac{|\nabla u(t,\cdot)(x)|^2}{u(t,x)^2}-\frac{\partial_t u(t,x)}{u(t,x)}\leq  \beta(t,x), \quad (t,x)\in (0,T)\times \R^d.
\end{eqnarray}
In addition, if $G$ is isomorphic to $\mathbb{Z}_2^d$, then
\begin{eqnarray}\label{thm-2-2}
-\Delta_\kappa\big(\log u(t,\cdot)\big)(x)\leq \frac{d+2\lambda_\kappa}{2t},\quad (t,x)\in (0,T)\times\R^d.
\end{eqnarray}
\end{theorem}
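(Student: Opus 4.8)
The plan is to prove Theorem~\ref{main-thm-2} by reducing everything to properties of the Dunkl heat kernel $(p_t)_{t>0}$ and to the product structure when $G\cong\mathbb{Z}_2^d$. First I would establish the equivalence of \eqref{thm-2-0} and \eqref{thm-2-1}. The implication \eqref{thm-2-1}$\Rightarrow$\eqref{thm-2-0} is trivial, since $p_t(\cdot,y)$ is itself a positive $C^2$ solution of \eqref{dunkl-heat-equ} in the variable $x$ (this uses standard regularity and positivity of the Dunkl heat kernel, recalled in Section~3). For the converse \eqref{thm-2-0}$\Rightarrow$\eqref{thm-2-1}, I would use the representation $u(t,x)=P_{t}\big(u(0,\cdot)\big)(x)=\int_{\R^d}p_t(x,z)\,u(0,z)\,\mu_\kappa(\d z)$ for an arbitrary positive solution, together with the semigroup/Chapman--Kolmogorov identity. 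The key analytic input is a convexity-type argument: writing $-\Delta_\kappa\log u = -\Delta_\kappa u/u + |\nabla_\kappa u|^2_{?}/u^2$ is delicate because $\Delta_\kappa$ is nonlocal, so instead I would argue pointwise at a fixed $(t_0,x_0)$ by freezing the kernel and applying the bound \eqref{thm-2-0} to the family $z\mapsto p_{t_0}(x_0,\cdot)$, then integrating against the nonnegative measure $u(0,z)\,\mu_\kappa(\d z)$ and using a Jensen-type inequality adapted to the operator $-\Delta_\kappa\log(\cdot)$. This is the standard ``superposition'' mechanism behind Li--Yau-to-Harnack passages; the care needed is that $\Delta_\kappa$ decomposes into a local second-order part plus a nonnegative-weight difference part, and the difference part interacts with $\log$ via the elementary inequality $\log a - \log b \ge 1 - b/a$.

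Next I would prove that either \eqref{thm-2-0} or \eqref{thm-2-1} implies the gradient-form inequality \eqref{thm-2-1+}. Here the point is the algebraic identity relating $-\Delta_\kappa(\log u)$ to the quantities appearing in \eqref{thm-2-1+}. Expanding $\Delta_\kappa(\log u)$ using the explicit formula for $\Delta_\kappa$ stated in the introduction gives
\begin{eqnarray*}
\Delta_\kappa(\log u)
&=&\Delta(\log u)+2\sum_{\alpha\in\mathfrak{R}_+}\kappa_\alpha\Big[\frac{\langle\alpha,\nabla\log u\rangle}{\alpha^*}-\frac{\log u-r_\alpha\log u}{(\alpha^*)^2}\Big]\\
&=&\frac{\Delta u}{u}-\frac{|\nabla u|^2}{u^2}+2\sum_{\alpha\in\mathfrak{R}_+}\kappa_\alpha\Big[\frac{\langle\alpha,\nabla u\rangle}{u\,\alpha^*}-\frac{\log u-r_\alpha\log u}{(\alpha^*)^2}\Big].
\end{eqnarray*}
Comparing with $\Delta_\kappa u/u=\Delta u/u+2\sum_\alpha\kappa_\alpha\big[\langle\alpha,\nabla u\rangle/(u\alpha^*)-(u-r_\alpha u)/(u(\alpha^*)^2)\big]$ and using $\partial_t u=\Delta_\kappa u$, I would rewrite $-\Delta_\kappa(\log u)$ as $|\nabla u|^2/u^2-\partial_t u/u$ plus a \emph{nonnegative} remainder coming from the difference terms, precisely the convexity inequality $\log u - r_\alpha\log u \ge 1 - r_\alpha u/u$, i.e. $u - r_\alpha u - u(\log u - r_\alpha\log u)\le 0$. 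Hence $|\nabla u|^2/u^2-\partial_t u/u\le -\Delta_\kappa(\log u)\le\beta(t,x)$, which is exactly \eqref{thm-2-1+}.

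Finally, for the last assertion \eqref{thm-2-2}, I would invoke Theorem~\ref{main-thm-1}: when $G\cong\mathbb{Z}_2^d$, \eqref{LY-1} gives $-\Delta_\kappa(\log p_t(\cdot,y))(x)\le (d+2\lambda_\kappa)/(2t)$ for all $t>0$, $x,y\in\R^d$; that is precisely \eqref{thm-2-0} with $\beta(t,x)=(d+2\lambda_\kappa)/(2t)$ (independent of $T$, so it holds for any $T\in(0,\infty]$). By the already-established equivalence \eqref{thm-2-0}$\Leftrightarrow$\eqref{thm-2-1}, we obtain \eqref{thm-2-2} for every positive $C^2$ solution $u$. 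The main obstacle I anticipate is the rigorous justification of the converse direction \eqref{thm-2-0}$\Rightarrow$\eqref{thm-2-1}: one must make sure that every positive $C^2$ solution on $[0,T)\times\R^d$ (with no a priori growth hypothesis) admits the heat-kernel representation $u(t,\cdot)=P_{t-s}u(s,\cdot)$, which requires a uniqueness/representation result for the Dunkl heat equation, or alternatively a direct pointwise maximum-principle argument bypassing the representation. I would handle this either by restricting to solutions arising from initial data and citing the relevant well-posedness in \cite{Rosler2003,ADH2019}, or by running a localized parabolic maximum principle for the quantity $-\Delta_\kappa(\log u)-\beta$; the nonlocal difference terms in $\Delta_\kappa$ are sign-favourable in such an argument, so the maximum-principle route should go through.
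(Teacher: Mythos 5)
Your proposal is correct and follows essentially the same route as the paper: the trivial direction via $p_t(\cdot,y)$ being itself a positive solution, the chain-rule identity $\Delta_\kappa\log u=\Delta_\kappa u/u-|\nabla\log u|^2+\Pi_{\log}(u)$ together with $\Pi_{\log}(u)\le 0$ for \eqref{thm-2-1+}, the superposition argument over the kernel representation for \eqref{thm-2-0}$\Rightarrow$\eqref{thm-2-1}, and Theorem~\ref{main-thm-1} for \eqref{thm-2-2}. The ``Jensen-type inequality'' you invoke is precisely what the paper carries out explicitly --- Cauchy--Schwarz for the gradient term and the tangent-line inequality for the convex function $\eta(t)=t-\log t-1$ for the nonlocal term --- and the representation issue you flag is handled there, as you anticipate, by reducing to compactly supported initial data via approximation.
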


An important direct consequence of Theorem \ref{main-thm-2}, presented in the next corollary, is the Harnack inequality, which is first established by B. Pini \cite{Pini1954} and J. Hadamard \cite{Had1954} for the standard heat equation on the Euclidean plane and then derived by P. Li and S.-T. Yau in \cite{LiYau86} from the Li--Yau inequality on Riemannian manifolds.
\begin{corollary}\label{harnack}
Let $T\in(0,\infty]$. Suppose that $u:[0,T)\times \R\rightarrow (0,\infty)$ is a $C^2$ solution to the Dunkl heat equation \eqref{dunkl-heat-equ} and the reflection group $G$ is isomorphic to $\mathbb{Z}_2^d$. Then for every $0<s<t<T$ and every $x,y\in\R^d$,
\begin{eqnarray*}
u(s,x)\leq u(t,y)\Big(\frac{t}{s}\Big)^{\lambda_\kappa+d/2}\exp\Big(\frac{|x-y|^2}{4(t-s)}\Big).
\end{eqnarray*}
\end{corollary}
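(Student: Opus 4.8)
The plan is the classical Li--Yau-to-Harnack argument: integrate the differential inequality \eqref{thm-2-1+} along a straight space-time segment joining $(s,x)$ to $(t,y)$. Since $G$ is isomorphic to $\mathbb{Z}_2^d$, Theorem \ref{main-thm-2} gives \eqref{thm-2-2}, which is precisely \eqref{thm-2-1} with $\beta(\tau,z)=(d+2\lambda_\kappa)/(2\tau)$; feeding this into the implication \eqref{thm-2-1}$\Rightarrow$\eqref{thm-2-1+} yields
$$\frac{|\nabla u(\tau,\cdot)(z)|^2}{u(\tau,z)^2}-\frac{\partial_\tau u(\tau,z)}{u(\tau,z)}\leq\frac{d+2\lambda_\kappa}{2\tau},\qquad (\tau,z)\in(0,T)\times\R^d.$$
Setting $f:=\log u$, which is $C^2$ because $u$ is a positive $C^2$ solution, and recalling that here $\nabla$ denotes the ordinary Euclidean gradient, this reads $|\nabla f|^2-\partial_\tau f\le(d+2\lambda_\kappa)/(2\tau)$.

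Fix $0<s<t<T$ and $x,y\in\R^d$, and put $\gamma(\tau)=x+\frac{\tau-s}{t-s}(y-x)$ for $\tau\in[s,t]$, so that $\gamma(s)=x$, $\gamma(t)=y$ and $\dot\gamma(\tau)\equiv\frac{y-x}{t-s}$. Using the chain rule for $\tau\mapsto f(\tau,\gamma(\tau))$ and the elementary bound $|\nabla f|^2+\langle\nabla f,\dot\gamma\rangle\ge-\tfrac14|\dot\gamma|^2$ (complete the square in $\nabla f+\tfrac12\dot\gamma$), I would estimate
$$\frac{d}{d\tau}f(\tau,\gamma(\tau))=\partial_\tau f+\langle\nabla f,\dot\gamma\rangle\ge|\nabla f|^2+\langle\nabla f,\dot\gamma\rangle-\frac{d+2\lambda_\kappa}{2\tau}\ge-\frac{|x-y|^2}{4(t-s)^2}-\frac{d+2\lambda_\kappa}{2\tau}.$$
Integrating over $[s,t]$ (legitimate since $s>0$ makes $1/\tau$ integrable) gives
$$f(t,y)-f(s,x)\ge-\frac{|x-y|^2}{4(t-s)}-\Big(\lambda_\kappa+\frac d2\Big)\log\frac ts,$$
and exponentiating produces exactly $u(s,x)\le u(t,y)(t/s)^{\lambda_\kappa+d/2}\exp\big(|x-y|^2/(4(t-s))\big)$.

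The computation itself is routine; the one point that deserves attention — and which is the substance obscured by the non-locality of $\Delta_\kappa$ — is that the gradient term in \eqref{thm-2-1+} is the genuine Euclidean gradient of $u$, so the single-variable function $\tau\mapsto f(\tau,\gamma(\tau))$ is differentiable with the usual chain rule along the segment and no additional jump terms appear. Given that, positivity and $C^2$-regularity of $u$ make every step valid. Optimizing over paths is unnecessary here: the straight-line choice already yields the sharp Gaussian factor and the exponent $\lambda_\kappa+d/2$, consistent with the Euclidean case $\kappa=0$ where it reduces to the standard parabolic Harnack inequality.
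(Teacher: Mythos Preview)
Your proof is correct and follows essentially the same approach as the paper: both invoke Theorem~\ref{main-thm-2} to obtain the gradient-type Li--Yau inequality and then integrate $\log u$ along a straight space-time segment joining $(s,x)$ and $(t,y)$. The only cosmetic differences are the direction of parameterization and that you complete the square directly where the paper uses Cauchy--Schwarz followed by Young's inequality, which amounts to the same estimate.
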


For more details on the Dunkl theory, refer to the survey paper \cite{Rosler2003} and the books \cite{DunklXu2014,DX2015}. In the remainder of the paper, the proof of Theorem \ref{main-thm-1} is presented in Section 2, and proofs of Theorem \ref{main-thm-2} and Corollary \ref{harnack} are given in Section 3.

\section{Proofs of Theorem \ref{main-thm-1}}\hskip\parindent
In this section, we shall consider the special situation when the reflection group $G$ is isomorphic to $\mathbb{Z}_2^d$. In this case, given real numbers $\kappa_i\geq0$, $i=1,\cdots,d$, for every $x=(x_1,\cdots,x_d)\in\R^d$,
$$\D_i f(x)=\partial_i f(x)+\frac{\kappa_i}{x_i}\big[f(x)-f(r_i x)\big],\quad f\in C^1(\R^d),\,i=1,\cdots,d,$$
and
\begin{eqnarray*}
\Delta_\kappa f(x)&=&\Delta f(x)+\sum_{i=1}^d \frac{\kappa_i}{x_i^2}\big[2x_i\partial_i f(x)-f(x)+f(r_i x)\big]\cr
&=&\sum_{i=1}^d\Big(\partial_{ii}^2 f(x) + \frac{\kappa_i}{x_i^2}\big[2x_i\partial_i f(x)-f(x)+f(r_i x)\big]\Big),\quad f\in C^2(\R^d),
\end{eqnarray*}
where $r_i(x_1,\cdots,x_i,\cdots,x_d)=(x_1,\cdots,x_{i-1},-x_i,x_{i+1},\cdots,x_d)$, $i=1,\cdots,d$. The weight function has the expression
$$\omega_\kappa(x)=\prod_{i=1}^d|x_i|^{\kappa_i},\quad x\in\R^d,$$
which is clearly homogeneous of degree $\lambda_\kappa=\sum_{i=1}^d \kappa_i$.

Moreover, for every $t>0$ and every $x,y\in\R^d$ with $x=(x_1,\cdots,x_d),y=(y_1,\cdots,y_d)$, $p_t(x,y)$ has the following explicit expression (see e.g. \cite{Dunkl1992,Rosler1998}), i.e.,
$$p_t(x,y)=\frac{1}{c_\kappa (2t)^{\lambda_\kappa+d/2}}\exp\Big(-\frac{|x|^2+|y|^2}{4t}\Big)\prod_{i=1}^d E_{\kappa_i}\Big(\frac{x_i}{\sqrt{2t}},\frac{y_i}{\sqrt{2t}}\Big),$$
where
\begin{eqnarray*}
c_\kappa&:=&\int_{\R^d}e^{-|x|^2/2}\,\mu_\kappa(\d x)=\prod_{i=1}^d\Gamma\Big(\kappa_i+\frac{1}{2}\Big),\\
E_{\kappa_i}(x_i,y_i)&:=&\frac{\Gamma(\kappa_i+1/2)}{\sqrt{\pi} \Gamma(\kappa_i)}\int_{-1}^1 (1-s)^{\kappa_i-1}(1+s)^{\kappa_i} e^{sx_i y_i}\,\d s,\quad i=1,\cdots,d,
\end{eqnarray*}
and $\Gamma(\cdot)$ is the Gamma function. For convenience, for each $i=1,\cdots,d$, let $c_{\kappa_i}=\Gamma(\kappa_i+1/2)$ and
\begin{eqnarray}\label{1d-kernel}
p^i_t(u,v)=\frac{1}{c_{\kappa_i} (2t)^{\kappa_i+1/2}}\exp\Big(-\frac{u^2+v^2}{4t}\Big)
E_{\kappa_i}\Big(\frac{u}{\sqrt{2t}},\frac{v}{\sqrt{2t}}\Big),\quad u,v\in\R,\,t>0.
\end{eqnarray}
Then, for every $t>0$ and every $x,y\in\R^d$ with $x=(x_1,\cdots,x_d)$ and $y=(y_1,\cdots,y_d)$,
\begin{eqnarray}\label{product-kernel}
p_t(x,y)=\prod_{i=1}^dp^i_t(x_i,y_i).
\end{eqnarray}
In the $\mathbb{Z}_2^d$ setting, refer to  \cite{ABDH2015} for more details on the Dunkl heat kernel and its estimates and to \cite{DX2015} for related materials.

Now we are ready to present the proof.
\begin{proof}[Proof of Theorem \ref{main-thm-1}] Let $t>0$ and $x,y\in\R^d$ with $x=(x_1,\cdots,x_d),y=(y_1,\cdots,y_d)$. By \eqref{product-kernel}, we immediately have
\begin{eqnarray}\label{thm-1-proof-1}
&&\Delta_\kappa\big(\log p_t(\cdot,y)\big)(x)=\sum_{j=1}^d \Delta_\kappa\big(\log p_t^j(\cdot,y_j)\big)(x_j)\cr
&=&\sum_{j=1}^d \Big(\frac{\kappa_j}{x_j^2}\big[2x_j\partial_{x_j}\log p_t^j(x_j,y_j)
-\log p_t^j(x_j,y_j)+ \log p_t^j(-x_j,y_j)\big]\cr
&&+\partial_{x_jx_j}^2 \log p_t^j(x_j,y_j)\Big).
\end{eqnarray}
Hence, it suffices to estimate the terms in the parentheses of \eqref{thm-1-proof-1}, i.e.,
$$\partial_{x_jx_j}^2 \log p_t^j(x_j,y_j)+\frac{\kappa_j}{x_j^2}\big[2x_j\partial_{x_j}\log p_t^j(x_j,y_j)
-\log p_t^j(x_j,y_j)+ \log p_t^j(-x_j,y_j)\big]=:{\rm I}_j,$$
and it reduces to the rank-one case.

For notational convenience, we ignore the subscript $j$ of ${\rm I}_j$ and for every $t>0$ and $x,y\in\R$, we let
\begin{eqnarray*}
{\rm I}:=\partial_{xx}^2 \log p_t(x,y)&+&\frac{\kappa}{x^2}\big[2x\partial_{x}\log p_t(x,y)-\log p_t(x,y)+ \log p_t(-x,y)\big],
\end{eqnarray*}
where $p_t(x,y)$ is given in \eqref{1d-kernel} by ignoring the subscript $i$.

Let $C_{\kappa}=\frac{\Gamma(\kappa+1/2)}{\sqrt{\pi}\Gamma(\kappa)}$ and  $g(s)=(1-s)^{\kappa-1}(1+s)^{\kappa}$. Then
$E_{\kappa}(x,y)=C_{\kappa}\int_{-1}^1e^{sxy}g(s)\,\d s$. Hence
\begin{eqnarray*}
\partial_x E_{\kappa}\Big(\frac{x}{\sqrt{2t}},\frac{y}{\sqrt{2t}}\Big)&=&C_{\kappa}\int_{-1}^1\frac{sy}{2t}e^{\frac{sxy}{2t}}g(s)\,\d s,\\
\partial_{xx}^2 E_\kappa\Big(\frac{x}{\sqrt{2t}},\frac{y}{\sqrt{2t}}\Big)&=&C_\kappa\int_{-1}^1\Big(\frac{sy}{2t}\Big)^2e^{\frac{sxy}{2t}}g(s)\,\d s.
\end{eqnarray*}

From \eqref{1d-kernel}, for every $t>0$ and $x,y\in\R$,
$$\log p_t(x,y)=-\log c_\kappa-\Big(\kappa+\frac{1}{2}\Big)\log(2 t)-\frac{x^2+y^2}{4t}+\log E_\kappa\Big(\frac{x}{\sqrt{2t}},\frac{y}{\sqrt{2t}}\Big),$$
and hence
\begin{eqnarray*}
\partial_{xx}^2\log p_t(x,y)&=&-\frac{1}{2t}+\frac{\partial_{xx}^2 E_\kappa\Big(\frac{x}{\sqrt{2t}},\frac{y}{\sqrt{2t}}\Big)}{ E_\kappa\Big(\frac{x}{\sqrt{2t}},\frac{y}{\sqrt{2t}}\Big)}-\frac{\bigg(\partial_x E_\kappa\Big(\frac{x}{\sqrt{2t}},\frac{y}{\sqrt{2t}}\Big)\bigg)^2}{E_\kappa\Big(\frac{x}{\sqrt{2t}},\frac{y}{\sqrt{2t}}\Big)^2}\\
&=:&-\frac{1}{2t}+A-B.
\end{eqnarray*}
It is easy to deduce that
\begin{eqnarray*}
A-B%&=&\frac{\partial_{xx}^2 E_\kappa\Big(\frac{x}{\sqrt{2t}},\frac{y}{\sqrt{2t}}\Big)}{ E_\kappa\Big(\frac{x}{\sqrt{2t}},\frac{y}{\sqrt{2t}}\Big)}-\frac{\bigg(\partial_x E_\kappa\Big(\frac{x}{\sqrt{2t}},\frac{y}{\sqrt{2t}}\Big)\bigg)^2}{E_\kappa\Big(\frac{x}{\sqrt{2t}},\frac{y}{\sqrt{2t}}\Big)^2}\\
&=&\frac{y^2}{4t^2}\left(\frac{\int_{-1}^1 s^2 g(s)e^{\frac{sxy}{2t}}\,\d s}{\int_{-1}^1 g(s)e^{\frac{sxy}{2t}}\,\d s}  -
\frac{\big(\int_{-1}^1 s g(s)e^{\frac{sxy}{2t}}\,\d s\big)^2}{\big(\int_{-1}^1 g(s)e^{\frac{sxy}{2t}}\,\d s\big)^2}\right)\\
&=&\frac{y^2}{4t^2}\frac{\big(\int_{-1}^1 s^2 g(s)e^{\frac{sxy}{2t}}\,\d s\big)\big(\int_{-1}^1 g(s)e^{\frac{sxy}{2t}}\,\d s\big)  -  \big(\int_{-1}^1 s g(s)e^{\frac{sxy}{2t}}\,\d s\big)^2}{\big(\int_{-1}^1 g(s)e^{\frac{sxy}{2t}}\,\d s\big)^2}\\
&\geq&0,
\end{eqnarray*}
by the Cauchy--Schwarz inequality. %, i.e.,
%$$\Big(\int_{-1}^1 s^2 g(s)e^{\frac{sxy}{2t}}\,\d s\Big) \Big(\int_{-1}^1 g(s)e^{\frac{sxy}{2t}}\,\d s\Big) \geq \Big(\int_{-1}^1 s g(s)e^{\frac{sxy}{2t}}\,\d s\Big)^2.$$
Hence
\begin{eqnarray}\label{proof-thm-1-1}
\partial_{xx}^2\log p_t(x,y)\geq-\frac{1}{2t}.
\end{eqnarray}

Let
$${\rm J}=\frac{\kappa}{x^2}\big[2x\partial_x\log p_t(x,y)+\log p_t(-x,y)-\log p_t(x,y)\big].$$
Then, letting $a=xy/(2t)$, we have
\begin{eqnarray}\label{eq-J}
{\rm J}&=&\frac{\kappa}{x^2}\left[-\frac{x^2}{t}+ 2x \frac{\partial_x E_\kappa\Big(\frac{x}{\sqrt{2t}},\frac{x}{\sqrt{2t}}\Big)}{  E_\kappa\Big(\frac{x}{\sqrt{2t}},\frac{x}{\sqrt{2t}}\Big)} + \log\frac{ E_\kappa\Big(-\frac{x}{\sqrt{2t}},\frac{x}{\sqrt{2t}}\Big)}{  E_\kappa\Big(\frac{x}{\sqrt{2t}},\frac{x}{\sqrt{2t}}\Big)}\right]\cr
%&=&\frac{\kappa}{x^2}\left[-\frac{x^2}{t}+ \frac{xy}{t} \frac{\int_{-1}^1 sg(s) e^{\frac{sxy}{2t}}\,\d s}{\int_{-1}^1 g(s) e^{\frac{sxy}{2t}}\,\d s  } + \log\frac{\int_{-1}^1 g(s) e^{-\frac{sxy}{2t}}\,\d s}{  \int_{-1}^1 g(s) e^{\frac{sxy}{2t}}\,\d s}\right]\\
&=&\frac{\kappa}{x^2}\left[-\frac{x^2}{t} + 2a \frac{\int_{-1}^1 sg(s) e^{as}\,\d s}{\int_{-1}^1 g(s) e^{as}\,\d s  } + \log\frac{\int_{-1}^1 g(s) e^{-as}\,\d s}{  \int_{-1}^1 g(s) e^{as}\,\d s}\right].
\end{eqnarray}

Set
$$f(a)=2a \frac{\int_{-1}^1 sg(s) e^{as}\,\d s}{\int_{-1}^1 g(s) e^{as}\,\d s  } + \log\frac{\int_{-1}^1 g(s) e^{-as}\,\d s}{  \int_{-1}^1 g(s) e^{as}\,\d s},\quad a\in\R.$$
We \textbf{claim} that $$f(a)\geq 0, \quad a\in\R.$$

Now we begin to prove the claim. Indeed, $f(0)=0$, and
\begin{eqnarray*}
f'(a)&=&2a \bigg[ \frac{\int_{-1}^1 s^2g(s) e^{as}\,\d s}{\int_{-1}^1 g(s) e^{as}\,\d s} -\frac{\big(\int_{-1}^1 sg(s) e^{as}\,\d s\big)^2}{\big(\int_{-1}^1 g(s) e^{as}\,\d s\big)^2} \bigg]\\
&&+\frac{\int_{-1}^1 sg(s) e^{as}\,\d s}{\int_{-1}^1 g(s) e^{as}\,\d s}-\frac{\int_{-1}^1 sg(s) e^{-as}\,\d s}{\int_{-1}^1 g(s) e^{-as}\,\d s}.
\end{eqnarray*}
Applying the Cauchy--Schwarz inequality, we have
\begin{eqnarray}\label{proof-J}
\frac{\int_{-1}^1 s^2g(s) e^{as}\,\d s}{\int_{-1}^1 g(s) e^{as}\,\d s} -\frac{\big(\int_{-1}^1 sg(s) e^{as}\,\d s\big)^2}{\big(\int_{-1}^1 g(s) e^{as}\,\d s\big)^2}\geq0.
\end{eqnarray}
For every $a\in\R$, set
$$h(a)=\Big(\int_{-1}^1 sg(s) e^{as}\,\d s\Big)\Big(\int_{-1}^1 g(s) e^{-as}\,\d s\Big)  -  \Big(\int_{-1}^1 g(s) e^{as}\,\d s\Big)
\Big(\int_{-1}^1 sg(s) e^{-as}\,\d s\Big).$$
Then $h(0)=0$, and
\begin{eqnarray*}
h'(a)&=&\Big(\int_{-1}^1 s^2g(s) e^{as}\,\d s\Big)\Big(\int_{-1}^1 g(s) e^{-as}\,\d s\Big) -2\Big(\int_{-1}^1 sg(s) e^{as}\,\d s\Big)\Big(\int_{-1}^1 sg(s) e^{-as}\,\d s\Big)\\
&&+\Big(\int_{-1}^1 g(s) e^{as}\,\d s\Big)\Big(\int_{-1}^1 s^2g(s) e^{-as}\,\d s\Big)\\
&\geq&AD-2\sqrt{ABCD}+BC=\big(\sqrt{AD}-\sqrt{BC} \big)^2\geq0,
\end{eqnarray*}
where we applied the Cauchy--Schwarz inequality twice and set
\begin{eqnarray*}
A&=&\int_{-1}^1 g(s) e^{as}\,\d s,\quad\,\,\,\,\,\, B=\int_{-1}^1 g(s) e^{-as}\,\d s,\\
C&=&\int_{-1}^1 s^2g(s) e^{as}\,\d s,\quad D=\int_{-1}^1 s^2g(s) e^{-as}\,\d s.
\end{eqnarray*}
Hence $a\mapsto h(a)$ is increasing in $\R$.

(1) Suppose $a\geq0$. Then, by \eqref{proof-J},
\begin{eqnarray*}
f'(a)&\geq&\frac{\int_{-1}^1 sg(s) e^{as}\,\d s}{\int_{-1}^1 g(s) e^{as}\,\d s} -  \frac{\int_{-1}^1 sg(s) e^{-as}\,\d s}{\int_{-1}^1 g(s) e^{-as}\,\d s}\\
&=&\frac{h(a)}{\big(\int_{-1}^1 g(s) e^{as}\,\d s\big)\big(\int_{-1}^1 g(s) e^{-as}\,\d s\big)}.
\end{eqnarray*}
Since $a\mapsto h(a)$ is increasing in $[0,\infty)$, we have $h(a)\geq h(0)=0$, $a\geq0$. Hence $f'(a)\geq0$, $a\geq0$.

(2) Suppose $a\leq0$. Then, by \eqref{proof-J},
\begin{eqnarray*}
f'(a)&\leq&\frac{\int_{-1}^1 sg(s) e^{as}\,\d s}{\int_{-1}^1 g(s) e^{as}\,\d s} -  \frac{\int_{-1}^1 sg(s) e^{-as}\,\d s}{\int_{-1}^1 g(s) e^{-as}\,\d s}\\
&=&\frac{h(a)}{\big(\int_{-1}^1 g(s) e^{as}\,\d s\big)\big(\int_{-1}^1 g(s) e^{-as}\,\d s\big)}.
\end{eqnarray*}
Since $a\mapsto h(a)$ is increasing in $(-\infty,0]$, we have $h(a)\leq h(0)=0$, $a\leq0$. Hence $f'(a)\leq0$, $a\leq0$.

Combining (1) and (2), we see that $f'(a)\geq0$ for every $a\geq0$, and $f'(a)\leq0$ for every $a\leq0$. Thus, $f(a)\geq f(0)=0$ for every $a\in\R$, which completes the proof of the claim.

Thus, by the claim and \eqref{eq-J}, we have
\begin{eqnarray}\label{proof-thm-1-2}
{\rm J}\geq -\frac{\kappa}{t}.
\end{eqnarray}

Combining \eqref{thm-1-proof-1}, \eqref{proof-thm-1-1} and \eqref{proof-thm-1-2}, we finally obtain
$$-\Delta_\kappa\big(\log p_t(\cdot,y)\big)(x)\leq\frac{d+2\lambda_\kappa}{2t},$$
which is \eqref{LY-1}.
\end{proof}

\section{Proofs of Theorem \ref{main-thm-2} and Corollary \ref{harnack}}\hskip\parindent
For $\psi\in C^1(\R)$ and $a,b\in\R$, let
$$\pi_\psi(a,b):=\psi(a)-\psi(b)-\psi'(b)(a-b).$$
In order to prove Theorem \ref{main-thm-2}, we need the following lemma which is motivated by \cite[Lemma 4.4]{GLR2018} in the particular case when $\psi(t)=|t|^p$ with $p>1$ for any $t\in\R$ and see also \cite[Lemma 2.1]{WZ2021} for the general pure jump case.
\begin{lemma}\label{chain-rule}
Let $I\subseteq\R$ be an interval, $\psi\in C^2(I)$ and $f\in C^2(\R^d,I)$. Then,
\begin{eqnarray*}
\Delta_\kappa\psi(f)=\psi'(f)\Delta_\kappa f+\psi''(f)|\nabla f|^2+\Pi_\psi(f),
\end{eqnarray*}
where
$$\Pi_\psi(f)(x):=2\sum_{\alpha\in\mathfrak{R}_+}\kappa_\alpha\frac{\pi_\psi\big(f(r_\alpha x),f(x)\big)}{\langle\alpha,x\rangle^2},\quad x\in\R^d.$$
In addition, if $f$ is $G$-invariant, i.e., $f(g x)=f(x)$ for every $g\in G$ and every $x\in\R^d$, then
$$\Delta_\kappa\psi(f)=\psi'(f)\Delta f+\psi''(f)|\nabla f|^2.$$
\end{lemma}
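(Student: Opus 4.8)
The plan is to compute $\Delta_\kappa \psi(f)$ directly from the explicit formula
$$\Delta_\kappa h = \Delta h + 2\sum_{\alpha\in\mathfrak{R}_+}\kappa_\alpha\Big[\frac{\langle\alpha,\nabla h\rangle}{\alpha^*} - \frac{h - r_\alpha h}{(\alpha^*)^2}\Big]$$
given in the introduction, applied to $h = \psi(f)$. First I would treat the local (Laplace--Beltrami) part: by the classical chain rule, $\Delta\psi(f) = \psi'(f)\Delta f + \psi''(f)|\nabla f|^2$, and likewise $\langle\alpha,\nabla\psi(f)\rangle = \psi'(f)\langle\alpha,\nabla f\rangle$. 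So the first-order difference-quotient terms contribute $2\sum_\alpha \kappa_\alpha \psi'(f)\langle\alpha,\nabla f\rangle/\alpha^*$. Collecting the two pieces that carry a factor $\psi'(f)$, namely $\psi'(f)\Delta f$ and $2\sum_\alpha\kappa_\alpha\psi'(f)\langle\alpha,\nabla f\rangle/\alpha^*$, and then adding and subtracting the quantity $2\sum_\alpha\kappa_\alpha\psi'(f)(f - r_\alpha f)/(\alpha^*)^2$, I can reassemble exactly $\psi'(f)\Delta_\kappa f$.

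The remaining terms are the genuinely non-local contribution: $-2\sum_\alpha\kappa_\alpha\big[(\psi(f) - r_\alpha\psi(f)) - \psi'(f)(f - r_\alpha f)\big]/(\alpha^*)^2$. Here $r_\alpha\psi(f)(x) = \psi(f(r_\alpha x))$, so the bracket equals $\psi(f(x)) - \psi(f(r_\alpha x)) - \psi'(f(x))\big(f(x) - f(r_\alpha x)\big)$, which is precisely $-\pi_\psi\big(f(r_\alpha x), f(x)\big)$ with the sign convention $\pi_\psi(a,b) = \psi(a) - \psi(b) - \psi'(b)(a-b)$ (take $a = f(r_\alpha x)$, $b = f(x)$). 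Thus $-2\sum_\alpha\kappa_\alpha[\cdots]/(\alpha^*)^2 = 2\sum_\alpha\kappa_\alpha \pi_\psi(f(r_\alpha x), f(x))/(\alpha^*)^2 = \Pi_\psi(f)(x)$, which gives the first displayed identity. The only analytic point to check is that $\psi(f)\in C^2$ and that the difference quotients are well-defined and finite at points of the mirrors $\alpha^\bot$; this is handled exactly as for $\Delta_\kappa f$ itself, since $f\in C^2$ forces both $f(x) - r_\alpha f(x) = O(\langle\alpha,x\rangle)$ and the second difference $\pi_\psi$ to vanish to appropriate order, so no singularity appears — I would just note this rather than belabor it.

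For the $G$-invariant case, since $G$ is generated by the reflections $\{r_\alpha : \alpha\in\mathfrak{R}\}$ and $f(gx) = f(x)$ for all $g\in G$, in particular $f(r_\alpha x) = f(x)$ for every $\alpha\in\mathfrak{R}_+$ and every $x$. Hence each $\pi_\psi(f(r_\alpha x), f(x)) = \pi_\psi(f(x), f(x)) = 0$, so $\Pi_\psi(f)\equiv 0$. Moreover the same invariance makes the first-order correction terms $\langle\alpha,\nabla f\rangle/\alpha^* = 0$ vanish pointwise (differentiating $f(r_\alpha x) = f(x)$ in the $\alpha$-direction gives $\langle\alpha,\nabla f(x)\rangle = 0$ on $\alpha^\bot$, and more simply the bracket in $\Delta_\kappa f$ already vanishes term-by-term for $G$-invariant $f$), so $\Delta_\kappa f = \Delta f$ on $G$-invariant functions; combined with the first part this yields $\Delta_\kappa\psi(f) = \psi'(f)\Delta f + \psi''(f)|\nabla f|^2$.

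I do not anticipate a serious obstacle here: the statement is essentially a bookkeeping identity, and the main thing to be careful about is the sign and argument order in the definition of $\pi_\psi$ (the bracket produced by $\Delta_\kappa$ naturally has $f(x)$ minus $f(r_\alpha x)$, whereas $\pi_\psi$ expands $\psi$ around its second argument), together with a brief remark justifying that nothing blows up on the reflecting hyperplanes for $C^2$ data. If anything is mildly delicate, it is making the "add and subtract" regrouping clean enough that the reader sees $\psi'(f)\Delta_\kappa f$ emerge without ambiguity; I would display that regrouping in one aligned computation.
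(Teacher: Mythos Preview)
Your argument for the first identity is essentially the same as the paper's: expand $\Delta_\kappa\psi(f)$ via the explicit formula, apply the ordinary chain rule to the Laplacian and gradient pieces, then add and subtract $\psi'(f)\,(f-r_\alpha f)/(\alpha^*)^2$ inside the sum to reassemble $\psi'(f)\Delta_\kappa f$ and recognize the remainder as $\Pi_\psi(f)$. The paper carries this out in a single displayed chain of equalities and does not comment on regularity at the mirrors.

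For the $G$-invariant addendum, however, there is a gap. You correctly observe that $\Pi_\psi(f)\equiv 0$ when $f$ is $G$-invariant, which from the first part gives $\Delta_\kappa\psi(f)=\psi'(f)\Delta_\kappa f+\psi''(f)|\nabla f|^2$. But your further claim that $\Delta_\kappa f=\Delta f$ for $G$-invariant $f$ is false: the difference term $(f-r_\alpha f)/(\alpha^*)^2$ indeed vanishes, yet $\langle\alpha,\nabla f\rangle/\alpha^*$ does not. Your own parenthetical already signals the trouble---differentiating $f(r_\alpha x)=f(x)$ only forces $\langle\alpha,\nabla f\rangle=0$ \emph{on} $\alpha^\bot$, not everywhere. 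A concrete counterexample in the rank-one $\Z_2$ case is $f(x)=x^2$, for which $\Delta_\kappa f=2+4\kappa\neq 2=\Delta f$. Thus the lemma's second assertion, as written with $\Delta f$ rather than $\Delta_\kappa f$, appears to be a typo in the paper; note that the paper itself does not prove it, saying only ``We only need to prove the first assertion.'' What actually follows from the first part together with $G$-invariance is $\Delta_\kappa\psi(f)=\psi'(f)\Delta_\kappa f+\psi''(f)|\nabla f|^2$, and this corrected form is what you should record.
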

\begin{proof}
We only need to prove the first assertion, which is by direct calculation. For every $x\in\R^d$,
\begin{eqnarray*}
\Delta_\kappa\psi(f)(x)&=&\Delta\psi(f)(x)+2\sum_{\alpha\in\mathfrak{R}_+}\kappa_\alpha
\bigg(\frac{\langle\alpha,\nabla\psi\big(f(x)\big)\rangle}{\langle\alpha,x\rangle}+\frac{\psi\big(f(r_\alpha x)\big)-\psi\big(f(x)\big)}{\langle\alpha,x\rangle^2}\bigg)\\
&=&\psi'\big(f(x)\big)\Delta f(x)+\psi''\big(f(x)\big)|\nabla f|^2(x)\\
&&+2\sum_{\alpha\in\mathfrak{R}_+}\kappa_\alpha
\bigg(\psi'\big(f(x)\big)\frac{\langle\alpha,\nabla f(x) \rangle}{\langle\alpha,x\rangle}
+\psi'\big(f(x)\big)\frac{f(r_\alpha x)-f(x)}{\langle\alpha,x\rangle^2}\\
&&+\frac{\psi\big(f(r_\alpha x)\big)-\psi\big(f(x)\big)-\psi'\big(f(x)\big)[f(r_\alpha x)-f(x)]}{\langle\alpha,x\rangle^2}
\bigg)\\
&=&\psi'\big(f(x)\big)\Delta_\kappa f(x)+\psi''\big(f(x)\big)|\nabla f|^2(x)+2\sum_{\alpha\in\mathfrak{R}_+}\kappa_\alpha\frac{\pi_\psi\big(f(r_\alpha x),f(x)\big)}{\langle\alpha,x\rangle^2},
\end{eqnarray*}
where we applied the chain rule for the Laplacian $\Delta$ in the second equality.
\end{proof}

It is known that $p_t(x,y)$ is an integral kernel of $P_t$ and a $C^\infty$ function of all variables $x,y\in\R^d$ and $t>0$, which satisfies that, for every $x,y\in\R^d$ and $t>0$, $p_t(x,y)=p_t(y,x)>0$, $\partial_t p_t(x,y)=\Delta_\kappa\big(  p_t(\cdot,y)\big)(x)$,  and
\begin{eqnarray*}\label{kernel-bd}
p_t(x,y)\leq \frac{1}{c_\kappa(2t)^{d/2+\lambda_\kappa}}\exp\Big(-\frac{\delta(x,y)^2}{4t}\Big),
\end{eqnarray*}
where $c_\kappa=\int_{\R^d}e^{-|x|^2/2}\,\mu_\kappa(\d x)$ as above and $\delta(x,y):=\min_{g\in G}|x-gy|$. Refer to  \cite{Rosler2003,ADH2019} for more details.

Now we start to prove Theorem \ref{main-thm-2}.
\begin{proof}[Proof of Theorem \ref{main-thm-2}]
From Theorem \ref{main-thm-1} and the first assertion, it is clear that \eqref{thm-2-2} holds. Moreover, it is easy to see that \eqref{thm-2-1} implies \eqref{thm-2-0}.

Let $u$ be a positive $C^2$ solution to \eqref{dunkl-heat-equ} and denote also $u_t=u(t,\cdot)$. Applying Lemma \ref{chain-rule} with $\psi(t)=\log t$ for every $t>0$, we obtain
\begin{eqnarray}\label{main-thm-2-1}
\Delta_\kappa\log u_t=\frac{\Delta_\kappa u_t}{u_t}-|\nabla\log u_t|^2+\Pi_{\log}(u_t).
\end{eqnarray}
Then
\begin{eqnarray*}
\frac{|\nabla u(t,\cdot)(x)|^2}{u(t,x)^2}-\frac{\partial_t u(t,x)}{u(t,x)}&=&\Pi_{\log}\big(u(t,\cdot)\big)(x)-\Delta_\kappa\big(\log u(t,\cdot)\big)(x)\\
&=&2\sum_{\alpha\in\mathfrak{R}_+}\frac{\kappa_\alpha}{\langle\alpha,x\rangle^2}\Big(\log\frac{u(t,r_\alpha x)}{u(t,x)} - \frac{u(t,r_\alpha x)}{u(t,x)} +1\Big)\\
&&-\Delta_\kappa\big(\log u(t,\cdot)\big)(x).
\end{eqnarray*}
Let $\xi(t)=\log t -t+1$, $t>0$. It is easy to see that $\xi(t)\leq \xi(1)=0$ for all $t>0$, which implies that $\Pi_{\log}\big(u(t,\cdot)\big)(x)\leq 0$ for all $t>0$ and $x\in\R^d$. Together with \eqref{thm-2-1}, we prove \eqref{thm-2-1+}.

Hence, it remains to prove that \eqref{thm-2-0} implies \eqref{thm-2-1}.

Let $u(0,\cdot)=f$. By the standard approximation as in the proof of \cite[Theorem 1]{YZ2020}, we may assume that $f$ is compactly supported. Then $u_t=P_tf$ by uniqueness. Since $u(t,x)=P_tf(x)=\int_{\R^d}f(y)p_t(x,y)\,\mu_\kappa(\d y)$, we have
\begin{eqnarray}\label{proof-thm-2-1}
&&\partial_t u(t,x)+\beta(t,x)u(t,x)\cr
&=&\int_{\R^d}f(y)\partial_t p_t(x,y)\,\mu(\d y) + \beta(t,x)\int_{\R^d}f(y)p_t(x,y)\,\mu_\kappa(\d y)\cr
&=&\int_{\R^d}f(y)\big[\Delta_\kappa p_t(\cdot,y)(x)+\beta(t,x) p_t(x,y)\big]\,\mu_\kappa(\d y)\cr
&=&\int_{\R^d}f(y)\big[p_t(x,y)\Delta_\kappa\big(\log p_t(\cdot,y)\big)(x)+p_t(x,y)|\nabla\log p_t(\cdot,y)(x)|^2\cr
&&+\beta(t,x) p_t(x,y)-p_t(x,y)\Pi_{\log}\big(p_t(\cdot,y)\big)(x)\big]\,\mu_\kappa(\d y)\cr
&\geq&\int_{\R^d}f(y)\big[p_t(x,y)|\nabla\log p_t(\cdot,y)(x)|^2-p_t(x,y)\Pi_{\log}\big(p_t(\cdot,y)\big)(x)\big]\,\mu_\kappa(\d y),
\end{eqnarray}
where we applied the dominated convergence theorem in the first equality (see also \cite{ADH2019}), \eqref{main-thm-2-1} in the third equality and \eqref{thm-2-0} in the last inequality.

It is easy to see that, by the Cauchy--Schwarz inequality,
\begin{eqnarray*}
|\nabla P_tf(x)|^2&\leq&\Big(\int_{\R^d}|\nabla p_t(\cdot,y)(x)|f(y)\,\mu_\kappa(\d y)\Big)^2\\
&\leq&\Big(\int_{\R^d}\frac{|\nabla p_t(\cdot,y)(x)|^2}{p_t(x,y)}f(y)\,\mu_\kappa(\d y)\Big)\Big(\int_{\R^d}p_t(x,y)f(y)\,\mu_\kappa(\d y)\Big),
\end{eqnarray*}
which implies that
\begin{eqnarray}\label{proof-thm-2-2}
\int_{\R^d}f(y) |\nabla\log p_t(\cdot,y)(x)|^2p_t(x,y)\,\mu_\kappa(\d y) \geq P_tf(x) |\nabla \log P_tf(x)|^2.
\end{eqnarray}

Now let
$${\rm I}=-\int_{\R^d}\Pi_{\log}\big(p_t(\cdot,y)\big)(x)p_t(x,y)f(y)\,\mu_\kappa(\d y),$$
and
$${\rm II}=-P_tf(x)\Pi_{\log}(  P_t f)(x).$$
Then
\begin{eqnarray*}
{\rm I}-{\rm II}&=&2\sum_{\alpha\in\mathfrak{R}_+}\frac{\kappa_\alpha}{\langle \alpha,x\rangle^2}\int_{\R^d} \Big[-\log\frac{p_t(r_\alpha x,y)}{p_t(x,y)}+\frac{p_t(r_\alpha x,y)}{p_t(x,y)}-1 \Big]p_t(x,y)f(y)\,\mu_\kappa(\d y)\\
&&- 2\sum_{\alpha\in\mathfrak{R}_+}\frac{\kappa_\alpha}{\langle \alpha,x\rangle^2}\int_{\R^d} \Big[-\log\frac{P_tf(r_\alpha x)}{P_tf(x)}+\frac{P_tf(r_\alpha x)}{P_tf(x)}-1 \Big]p_t(x,y)f(y)\,\mu_\kappa(\d y)\\
&=&2\sum_{\alpha\in\mathfrak{R}_+}\frac{\kappa_\alpha}{\langle \alpha,x\rangle^2}\int_{\R^d} \left[\eta\Big(\frac{p_t(r_\alpha x,y)}{p_t(x,y)}\Big) -  \eta\Big(\frac{P_tf(r_\alpha x)}{P_tf(x)}\Big) \right]p_t(x,y)f(y)\,\mu_\kappa(\d y),
\end{eqnarray*}
where $\eta(t)=t-\log t-1$, $t\in\R$. Since $t\mapsto \eta(t)$ is convex in $(0,\infty)$ and $ \frac{\d}{\d t}\eta(t)=(t-1)/t$, we have
$$ \eta(t)- \eta(s)\geq\frac{s-1}{s}(t-s), \quad s,t>0.$$
Hence
\begin{eqnarray*}
&&{\rm I}-{\rm II}\\
&\geq& 2\sum_{\alpha\in\mathfrak{R}_+}\frac{\kappa_\alpha}{\langle \alpha,x\rangle^2}
\int_{\R^d} \frac{P_tf(r_\alpha x)-P_tf(x)}{P_tf(r_\alpha x)}\Big[\frac{p_t(r_\alpha x,y)}{p_t(x,y)}-\frac{P_tf(r_\alpha x)}{P_tf(x)}\Big]
p_t(x,y)f(y)\,\mu_\kappa(\d y)\\
&=&2\sum_{\alpha\in\mathfrak{R}_+}\frac{\kappa_\alpha}{\langle \alpha,x\rangle^2}\bigg(
\int_{\R^d} \frac{P_tf(r_\alpha x)-P_tf(x)}{P_tf(r_\alpha x)}p_t(r_\alpha x,y)f(y)\,\mu_\kappa(\d y)\\
&&-\int_{\R^d}\frac{P_tf(r_\alpha x)-P_tf(x)}{P_tf(x)}p_t(x,y)
f(y)\,\mu_\kappa(\d y)
\bigg)\\
&=&2\sum_{\alpha\in\mathfrak{R}_+}\frac{\kappa_\alpha}{\langle \alpha,x\rangle^2}\Big(\big[P_tf(r_\alpha x)-P_tf(x)\big]- \big[P_tf(r_\alpha x)-P_tf(x)\big]\Big)\\
&=&0,
\end{eqnarray*}
which means that
\begin{eqnarray}\label{proof-thm-2-3}
\int_{\R^d}\Pi_{\log}\big(p_t(\cdot,y)\big)(x)p_t(x,y)f(y)\,\mu_\kappa(\d y)\leq P_tf(x)\Pi_{\log}( P_t f)(x).
\end{eqnarray}

Combining \eqref{proof-thm-2-1}, \eqref{proof-thm-2-2} and \eqref{proof-thm-2-3}, we have
\begin{eqnarray*}\label{proof-thm-2-4}
\partial_t u(t,x)+\beta(t,x)u(t,x)&\geq& P_tf(x)|\nabla\log P_t f(x)|^2-P_tf(x)\Pi_{\log}(P_tf)(x)\\
&=&u(t,x)|\nabla u(t,\cdot)(x)|^2-u(t,x)\Pi_{\log}\big(u(t,\cdot)\big)(x),
\end{eqnarray*}
which is equivalent to
$$-\Delta_\kappa\big(\log u(t,\cdot)\big)(x)\leq \beta(t,x)$$
by \eqref{main-thm-2-1} and \eqref{thm-2-0}.
\end{proof}

The proof of the Harnack inequality is standard. Now we present it here for the sake of completeness.
\begin{proof}[Prof of Corollary \ref{harnack}]
By Theorem \ref{main-thm-2}, it is clear that
\begin{eqnarray}\label{pf-harnack-1}
\frac{|\nabla u(t,\cdot)(x)|^2}{u(t,x)^2}-\frac{\partial_t u(t,x)}{u(t,x)}\leq  \frac{d+2\lambda_\kappa}{2t},\quad (t,x)\in (0,T)\times \R^d.
\end{eqnarray}
Let $0<s<t<T$, $x,y\in\R^d$, and
$$\gamma_\tau=\big(t+\tau(s-t),y+\tau(x-y)\big),\quad \tau\in[0,1],$$
be the straight line starting from $(t,y)$ to $(s,x)$.
Consider the function
$$\phi(\tau):=\log u(\gamma_{\tau}),\quad \tau\in[0,1].$$
 Then
\begin{eqnarray*}
&&\log \frac{u(s,x)}{u(t,y)}=\phi(1)-\phi(0)=\int_0^1 \phi'(\tau)\,\d\tau\\
&=&\int_0^1\left[\Big\langle\frac{\nabla_x u(\gamma_\tau)}{u(\gamma_\tau)}, x-y\Big\rangle-(t-s)\frac{\partial_t u(\gamma_\tau)}{u(\gamma_\tau)}\right]\,\d\tau\\
&\leq&\int_0^1|x-y|\frac{|\nabla_x u(\gamma_\tau)|}{u(\gamma_\tau)}\,\d\tau - (t-s)\int_0^1 \frac{|\nabla_x u(\gamma_\tau)|^2}{u(\gamma_\tau)^2}\,\d\tau
 + \int_0^1\frac{(t-s)(d+2\lambda_\kappa)}{2\big(t+\tau(s-t)\big)}\,\d\tau\\
&\leq&|x-y|\Big(\int_0^1\frac{|\nabla_x u(\gamma_\tau)|^2}{u(\gamma_\tau)^2}\,\d\tau\Big)^{1/2}- (t-s)\int_0^1 \frac{|\nabla_x u(\gamma_\tau)|^2}{u(\gamma_\tau)^2}\,\d\tau +\log\Big(\frac{t}{s}\Big)^{\lambda_\kappa+d/2}\\
&\leq& \frac{|x-y|^2}{4(t-s)}+\log\Big(\frac{t}{s}\Big)^{\lambda_\kappa+d/2},
\end{eqnarray*}
where we applied \eqref{pf-harnack-1} in the first inequality, the Cauchy--Schwarz inequality in the second one and Young's inequality in the last one. Thus, we immediately have
$$u(s,x)\leq u(t,y)\Big(\frac{t}{s}\Big)^{\lambda_\kappa+d/2}\exp\Big(\frac{|x-y|^2}{4(t-s)}\Big).$$
\end{proof}

There is a remark on the geometric interpretation of \eqref{pf-harnack-1} as \cite[Corollary 1.4]{HeTop2013}.
\begin{remark}
Let $\varrho_t(x)=p_t(x,0)$. Then, under the same assumption of Corollary \ref{harnack},  for every $t\in (0,T)$, $\R^d\ni x\mapsto \log(\frac{u_t}{\varrho_t})(x)$ is convex. Indeed, for every $t>0,\,x,y\in\R^d$ with $x=(x_1,\cdots,x_d),y=(y_1,\cdots,y_d)$, letting $q_t(x,y)=p_t(x,y)/\varrho_t(x)$,  by \eqref{1d-kernel}, \eqref{product-kernel} and the fact that $E_{\kappa_j}(x_j,0)=1$ for every $j=1,\cdots,d$  (see e.g. \cite[page 2366]{ADH2019}),  we have
$$q_t(x,y)=\exp\Big(-\frac{|y|^2}{4t}\Big)\prod_{j=1}^d c_j\int_{-1}^{1}g_j(s)e^{\frac{sx_jy_j}{2t}}\,\d s,$$
where $c_j:=\frac{\Gamma(\kappa_i+1/2)}{\sqrt{\pi} \Gamma(\kappa_i)}$ and $g_j(s):=(1-s)^{\kappa_j-1}(1+s)^{\kappa_j}$, $j=1,\cdots,d$. Then it is easy to see that $\partial_{x_i x_j}^2\log q_t(x,y)=0$, $1\leq i\neq j\leq d$, and for each $j=1,\cdots,d$,
\begin{eqnarray*}
\partial_{x_j x_j}^2\log q_t(x,y)&=&\frac{y_j^2}{4t^2}\frac{\int_{-1}^1 s^2g_j(s)e^{\frac{sx_jy_j}{2t}}\,\d s}{\int_{-1}^1 g_j(s)e^{\frac{sx_jy_j}{2t}}\,\d s} -\frac{y_j}{2t}\frac{\big(\int_{-1}^1 s g_j(s)e^{\frac{sx_jy_j}{2t}}\,\d s\big)^2}{\big(\int_{-1}^1 g_j(s)e^{\frac{sx_jy_j}{2t}}\,\d s\big)^2}\\
&\geq&0,
\end{eqnarray*}
by the Cauchy--Schwarz inequality. Hence $x\mapsto \log q_t(x,y)$ is convex. Thus, for every $\theta\in(0,1)$ and every $z_1,z_2\in\R^d$, by H\"{o}lder's inequality, we deduce that
\begin{eqnarray*}
&&\Big(\frac{u_t}{\varrho_t}\Big)\big(\theta z_1+(1-\theta)z_2\big)\\
&=&\int_{\R^d}q_t\big(\theta z_1+(1-\theta)z_2,y\big)u(0,y)\,\mu_\kappa(\d y)\\
&\leq&\int_{\R^d} q_t(z_1,y)^\theta q_t(z_2,y)^{1-\theta}u(0,y)\,\mu_\kappa(\d y)\\
&\leq&\Big(\int_{\R^d} q_t(z_1,y)u(0,y)\,\mu_\kappa(\d y)\Big)^\theta\Big(\int_{\R^d} q_t(z_2,y)u(0,y)\,\mu_\kappa(\d y)\Big)^{1-\theta}\\
&=&\Big[\Big(\frac{u_t}{\varrho_t}\Big)(z_1)\Big]^\theta\Big[\Big(\frac{u_t}{\varrho_t}\Big)(z_2)\Big]^{1-\theta},
\end{eqnarray*}
which implies that $\log(\frac{u_t}{\varrho_t})(\cdot)$ is convex.
\end{remark}

\subsection*{Acknowledgment}\hskip\parindent
The first named author would like to thank Dr. Niushan Gao for helpful discussions and acknowledge the Department of Mathematics and the Faculty of Science at Ryerson University for financial support and the financial support from the National Natural Science Foundation of China (Grant No. 11831014). The second named author would like to acknowledge the financial support from Qing Lan Project of Jiangsu.

\end{document}